\documentclass[12pt]{article}

\input{mathdefs.sty}

\usepackage[dvips, outer=0.95in, top=1in, bottom=1.2in]{geometry}

\usepackage{bm}

\newcommand{\sns}[1]{\ensuremath{\text{\upshape{\sffamily{#1}}}}}

\title{Limiting distribution of visits of sereval rotations to shrinking intervals}
\author{Ilya Vinogradov}
\date{\today}

\newtheorem{theorem}{Theorem}
\newtheorem{lemma}[theorem]{Lemma}
\newtheorem{prop}[theorem]{Proposition}

\newcounter{theremark}
\setcounter{theremark}{1}

\newenvironment{remark}{\medskip\parindent=0pt\small\textbf{Remark \arabic{theremark}\addtocounter{theremark}{1}.\ }\ignorespaces}{\medskip\par\normalsize\parindent=18pt}

\begin{document}

\maketitle

\begin{abstract}
We show that given $n$ normalized intervals on the
unit circle, the numbers of visits of $d$ random rotations to these intervals have
a joint limiting distribution as lengths of trajectories tend to infinity. If $d$ then tends to infinity, then the numbers of points in different intervals become asymptotically independent unless an arithmetic obstruction arises. This is a generalization of earlier results of J. Marklof.   
\end{abstract}

The following question arises from two results of Marklof about gap distribution for rotations. Fix a point $\xi$ in $[0,1)$ and let $B_N=(0,N^{1/(d-1)}]^{\oplus{(d-1)}}\oplus\R\subset \R^d$. What what is the limiting behavior of the number of points of the form $$\left\{\sum_{i=1}^{d-1} m_i\alpha_i\pmod 1\colon m_i\in[1,N^{1/(d-1)}]\cap\mathbf Z, 1\le i\le d-1\right\},$$ for $\alpha_i\in[0,1)$ that land in $(\xi-\frac\sigma N,\xi+\frac\sigma N)$ as $N\to\infty$? In \cite{marklof_2000}, J. Marklof showed  that 
$$\mathrm{leb}\left\{(\alpha,\xi)\in[0,1)^{d-1}\times[0,1)\colon\#\{\bm m\in B_N\cap\Z^d\colon \xi+\sum_{i=1}^{d-1}m_i\alpha_i+m_d\in (-\tfrac{\sigma}{N},\tfrac{\sigma}{N})\}=A\right\}\to P^{(d)}(A)$$ 
as $N\to\infty$ and found its decay as $A\to\infty$. His main tool was the mixing property of a diagonal flow on $\sldr/\sldz$ that had been proved by Moore \cite{moore_ergodicity_1966}. In a later note Marklof remarked that for one variable (that is, $d=2$), a stronger result is true due to a Theorem of Shah \cite{shah_limit_1996}. Namely, for fixed $\xi\in[0,1)\setminus \Q$, $$\mathrm{leb}\left\{\alpha\in[0,1)\colon\#\{ m\in \{1,\dots,N\}\colon \xi+m\alpha\pmod 1\in (-\tfrac{\sigma}{N},\tfrac{\sigma}{N})\}=A\right\}\to P^{(2)}(A).$$ This result uses Ratner's Theorem on measures invariant under unipotent flows \cite{ratner}. We will generalize the theorems mentioned above to joint limiting probability distributions for  several intervals and study their large $d$ limits.

\section{Notation and results}

We will use the following
notation. 
\begin{itemize}\item $N$, $n$, and $d$ denote positive integers with $d\ge 2$;
\item upper indices (usually $j$) run from 1 to $n$ and lower indices (usually $i$) run from 1 to $d$ unless stated otherwise;
\item $\bm m=(m_1,\dots,m_d)$ is a vector of $d$ integers;
 \item $\bm\sigma=(\sigma^1,\dots,\sigma^n)$ is a positive vector ($\sigma^j>0$
for all $j$);
\item $\bm\alpha=(\alpha_1,\dots,\alpha_{d-1})\in(\R/\Z)^{d-1}$;
\item $\bm\xi=(\xi^1,\dots,\xi^n)\in(\R/\Z)^n$;
\item $\bm\tau=(\tau^1,\dots,\tau^n)$ is a real vector;
\item $\Pois\sigma$ denotes the Poisson distribution with parameter $\sigma$. 
\end{itemize}
If $n=1$, we will write $\sigma$ instead of $\sigma^1$ and $\bm\sigma$ and
similarly for other variables. Let $B_N=(0,N^{1/(d-1)}]^{\oplus (d-1)}\oplus\R\subset \R^d$ as before. For a measurable set $S$ define random variables $X_{\xi,S}^{N,d}\colon [0,1)\to\Z$ by $$X_{\xi,S}^{N,d}=\#\left\{\bm m\in\Z^d\cap B_N\colon
\sum_{i=1}^{d-1}m_i\alpha_i+m_d\in \xi+\frac{S}{N}\right\}.$$ We will usually suppress the upper indices on $X_{\xi,S}$. For a vector $\bm\xi\in \T^n$, the set $\Xi\subset\T^n$ is the  closure of the orbit of rotation by $\bm\xi$ on the torus: $\Xi=\overline{\{k\bm\xi\colon k\in\Z\}}$; it is the smallest closed Lie subgroup of $\T^n$ that contains $\bm\xi$. 

Our results for limiting distributions of $X_{\xi,S}$ are as follows. 

\begin{theorem}
Fix any absolutely continuous probability measure on $[0,1)$. With notation as above, the distribution of $$\bm X_{\bm \xi,\bm \tau,\bm\sigma}=(X_{\xi^1,(\tau^1,\tau^1+\sigma^1)},\dots,X_{\xi^n,(\tau^n,\tau^n+\sigma^n)})$$ has a weak limit  as $N\to\infty$; we denote it by $\P_{n,\bm\sigma, \Xi,\bm\tau/\Xi}^{(d)} $. 
\end{theorem}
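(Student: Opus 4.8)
The plan is to translate the counting problem into a question about lattices in the space of unimodular lattices $\sldr/\sldz$ (with $d$ the relevant dimension here), following the strategy that Marklof used in \cite{marklof_2000} but now tracking the joint statistics for several intervals simultaneously. The key observation is that the count $X_{\xi^j,(\tau^j,\tau^j+\sigma^j)}$ equals the number of lattice points of a certain rotated-and-sheared lattice $g_N(\bm\alpha)\,\Z^d$ that fall into a fixed thin ``window'' (a box of fixed volume) translated by a vector depending on $\xi^j$. Concretely, I would encode $\bm\alpha$ together with the dilation by the diagonal element $\diag(N^{-1/(d-1)},\dots,N^{-1/(d-1)},N)$ as a point $g_N(\bm\alpha)\sldz$ in the homogeneous space, so that $B_N\cap\Z^d$ maps to the intersection of $g_N(\bm\alpha)\Z^d$ with the fixed unit box, and the interval condition becomes incidence with a fixed target cell. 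Thus the vector $\bm X_{\bm\xi,\bm\tau,\bm\sigma}$ becomes a fixed function (a vector of counting functions against fixed windows shifted by $\bm\xi$) evaluated at the random point $g_N(\bm\alpha)\sldz$.

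Once this geometric reformulation is in place, the limiting distribution as $N\to\infty$ is governed by the limiting distribution of the random points $g_N(\bm\alpha)\sldz$ in $\sldr/\sldz$, where $\bm\alpha$ is distributed according to the fixed absolutely continuous measure on the relevant parameter space. The plan is to show that $g_N(\bm\alpha)\sldz$ equidistributes, as $N\to\infty$, with respect to some limiting measure on $\sldr/\sldz$; the main point is that the pushforward of the absolutely continuous law of $\bm\alpha$ under $g_N$ becomes, after the large dilation, an expanding piece of an orbit of a horospherical (unipotent) subgroup, whose equidistribution is exactly the content of the dynamical input. For the full-dimensional case I would invoke the mixing of the diagonal flow as in \cite{marklof_2000}, while the sharper single-rotation statement of Shah \cite{shah_limit_1996} and the appearance of the orbit-closure data $\Xi$ and the quotient $\bm\tau/\Xi$ in the notation strongly suggest that the correct general tool is Ratner's measure classification \cite{ratner}: the limiting measure is the natural homogeneous measure supported on the submanifold determined by the arithmetic of $\bm\xi$, i.e.\ by $\Xi$. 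The dependence of the limit on $\Xi$ and on $\bm\tau/\Xi$ enters precisely because the several shift vectors $\bm\xi$ may satisfy rational linear relations, forcing the target windows to be correlated.

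With equidistribution of $g_N(\bm\alpha)\sldz$ toward the limiting homogeneous measure established, the convergence in distribution of $\bm X_{\bm\xi,\bm\tau,\bm\sigma}$ would follow by a standard argument: for each fixed value vector $\bm A\in\Z^n$ the event $\{\bm X=\bm A\}$ corresponds to an indicator of a subset of $\sldr/\sldz$ cut out by the counting conditions, and I would show this indicator is a bounded, a.e.-continuous function with respect to the limit measure, so that equidistribution yields convergence of the probabilities $\P(\bm X=\bm A)$. A technical wrinkle is that the counting functions are unbounded and the windows are noncompact (the box $B_N$ is unbounded in the last coordinate), so I would need a truncation/tightness argument to control lattice points escaping to infinity and to rule out mass escaping in the noncompact space $\sldr/\sldz$; nondivergence estimates for unipotent flows (in the spirit of Dani--Margulis) handle this. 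I expect the main obstacle to be exactly this step of identifying the limiting measure and its support precisely in terms of $\Xi$: establishing that the expanding translates of the $\bm\alpha$-orbit equidistribute toward a single explicitly described homogeneous measure, and correctly accounting for the arithmetic obstructions encoded by $\bm\tau/\Xi$ when the $\xi^j$ are rationally related, rather than merely proving abstract convergence along subsequences.
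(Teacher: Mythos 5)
Your proposal is correct and follows essentially the same route as the paper: reformulate the counts as fixed functions on a (toral extension of) $\sldr/\sldz$, apply Shah's theorem on equidistribution of expanding translates of the horospherical orbit $\Phi_t U_{\bm\alpha}$ (resting on Ratner), and pass to indicator functions by approximation. The one step you flag as the ``main obstacle''---identifying the limiting homogeneous measure in terms of $\Xi$---is handled in the paper by conjugating $\sldr$ into $L=\sldr\ltimes(\R^d)^{\oplus n}$ by the translations $\bm v^j=(0,\dots,0,\xi^j)^T$ and taking $\hat L_V$ to be the smallest $\Q$-group containing this copy, so that the orbit is dense in $\hat L_V/\hat\Lambda_V$ (a sub-bundle with fiber $\Xi^d$) and Shah's density hypothesis is automatic.
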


In other words, the numbers of points in shrinking segments
$(\xi^j+\frac{\tau^j}{N},\xi^j+\frac{\sigma^j+\tau^j}{N})$, $1\le j\le n$, with fixed ``centers'' $\xi^j$ have a joint
limiting distribution as $N$ tends to infinity. The limiting distribution depends on $d$, $n$, $\bm\sigma$, $\Xi$, and  $\bm\tau$ modulo $\Xi$. In particular, if $\Xi=\T^n$, then the distribution is independent of $\bm\tau$.

\begin{remark}
Jens Marklof proved special cases of this theorem. He proved the case $n=1$,
$d=2$ in \cite{marklof_survey} and the case $n=1$ and arbitrary $d$ with average over $\xi$ in
\cite{marklof_2000}.
\end{remark}

\begin{theorem}
Let $\P^{(d)}_{n,\bm\sigma,\Xi,\bm\tau/\Xi}$ be the distribution from Theorem 1. Then, $\P^{(d)}_{n,\bm\sigma,\Xi,\bm\tau/\Xi}$ has a weak limit as $d\to\infty$. Furthermore, $$\P^{(d)}_{n,\bm\sigma,\Xi,\bm\tau/\Xi}\Longrightarrow (\Pois \sigma_1,\dots,\Pois \sigma_n)$$ as $d\to\infty$ iff $$(\tau^j,\tau^j+\sigma^j)\cap (\tau^{j'},\tau^{j'}+\sigma^{j'})=\emptyset\text{ whenever }\xi^j=\xi^{j'}.$$
\end{theorem}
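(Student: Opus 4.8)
The plan is to reduce Theorem 2 to a Poisson limit theorem for the point process of a random unimodular affine lattice in $\R^d$ as $d\to\infty$, and then to read off the joint distribution from the covariance structure of the limiting Poisson process.

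First I would recall from the proof of Theorem 1 the geometric description of $\P^{(d)}_{n,\bm\sigma,\Xi,\bm\tau/\Xi}$. After rescaling $\bm m\mapsto g_N\bm m$ by the unimodular matrix built from $\diag(N^{-1/(d-1)},\dots,N^{-1/(d-1)},N)$ and the unipotent part encoding $\bm\alpha$, the count $X_{\xi^j,(\tau^j,\tau^j+\sigma^j)}$ becomes the number of points of an affine lattice $\mathcal L_d=g\Z^d+\bm v$ lying in the box $\mathcal C_j=(0,1]^{d-1}\times(\tau^j,\tau^j+\sigma^j)$ placed in the layer at height $\xi^j$ in the last coordinate. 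Equidistribution (via Ratner's theorem) makes $g$ Haar-distributed on $\sldr/\sldz$, while the last-coordinate offsets of the $n$ boxes form a vector distributed by Haar measure on $\Xi$, reflecting that $(N\xi^1,\dots,N\xi^n)\bmod 1$ equidistributes in $\Xi$. Since the last-coordinate offset enters only through $\xi^j\in\R/\Z$, two boxes share a layer precisely when $\xi^j=\xi^{j'}$, whence
$$\mathcal C_j\cap\mathcal C_{j'}\neq\emptyset\iff \xi^j=\xi^{j'}\text{ and }(\tau^j,\tau^j+\sigma^j)\cap(\tau^{j'},\tau^{j'}+\sigma^{j'})\neq\emptyset,$$
in which case the overlap has dimension-independent volume $|(\tau^j,\tau^j+\sigma^j)\cap(\tau^{j'},\tau^{j'}+\sigma^{j'})|$.

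The technical core is a Poisson limit theorem for $\mathcal L_d$. I would show, via mixed factorial moments, that the random vector $\bm N=(N_1,\dots,N_n)$ with $N_j=\#(\mathcal L_d\cap\mathcal C_j)$ converges in distribution as $d\to\infty$ to $(\#(\Pi\cap\mathcal C_1),\dots,\#(\Pi\cap\mathcal C_n))$, where $\Pi$ is a Poisson point process of unit (Lebesgue) intensity. Siegel's mean value theorem for affine lattices gives $\E N_j=\vol\mathcal C_j=\sigma^j$, and the Rogers-type formulas for the higher moments of the Siegel transform over the affine ensemble express $\E\big[\prod_j (N_j)_{(k_j)}\big]$ as a sum over partitions; the affine shift suppresses every clustering contribution as $d\to\infty$, so each correlation measure converges to the corresponding power of Lebesgue measure restricted to distinct points, and the mixed factorial moments converge to those of $\Pi$ on the regions $\mathcal C_j$. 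Because the limiting moments grow slowly enough to determine the distribution, this yields both the existence of the weak limit of $\P^{(d)}$ (the law of $\bm N$ depends on $d$ only through the fixed volumes $\sigma^j$ and the dimension-independent overlap volumes) and its identification as the count vector of $\Pi$.

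With this in hand the equivalence follows. If the intervals are disjoint whenever $\xi^j=\xi^{j'}$, then by the displayed criterion $\mathcal C_1,\dots,\mathcal C_n$ are pairwise disjoint, so the counts $\#(\Pi\cap\mathcal C_j)$ are independent with $\#(\Pi\cap\mathcal C_j)\sim\Pois\sigma^j$, giving $\P^{(d)}\Longrightarrow(\Pois\sigma^1,\dots,\Pois\sigma^n)$. Conversely, if $\xi^j=\xi^{j'}$ and the intervals overlap for some pair, then in the limit $\mathrm{Cov}(\#(\Pi\cap\mathcal C_j),\#(\Pi\cap\mathcal C_{j'}))=\vol(\mathcal C_j\cap\mathcal C_{j'})>0$; since the factorial-moment convergence lets me pass covariances to the limit, the limit law has a strictly positive covariance between coordinates $j$ and $j'$ and hence cannot be a product of independent Poisson laws. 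I expect the main obstacle to be the Poisson limit theorem of the third paragraph: controlling the Rogers-type moment formulas uniformly in $d$ and verifying that every non-diagonal term degenerates to a product of Lebesgue measures. A secondary subtlety is bookkeeping the $\Xi$-coupling of the layers, namely checking that boxes in distinct layers ($\xi^j\neq\xi^{j'}$) decorrelate in the limit regardless of how $\Xi$ couples their offsets, so that only the coincidence pattern $\{\xi^j=\xi^{j'}\}$, and not the finer structure of $\Xi$, survives as $d\to\infty$.
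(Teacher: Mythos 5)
Your overall strategy coincides with the paper's: both compute mixed moments of the counting functions via a Rogers-type mean value theorem over the shifted-lattice ensemble, show that as $d\to\infty$ only certain ``diagonal'' contributions survive, and read off the dependence structure from the coincidence pattern $\{\xi^j=\xi^{j'}\}$ together with the interval overlaps. Your identification of the limit law as the count vector of a unit-intensity Poisson process on overlapping boxes is consistent with what the paper obtains, and your covariance argument for the converse direction matches the paper's.

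The gap is that the two points you defer as ``obstacles'' are precisely the substance of the proof, and the mechanism you propose for the first is not the right one. You attribute the suppression of clustering terms to ``the affine shift,'' but in the paper the suppression of a non-diagonal rational plane $\pi\in\Gr(r,l)(\Q)$ is purely geometric and present already for the unshifted lattice: such a plane contributes essentially $\bigl(\vol(\pi\cap[0,1]^r)/\covol\pi_\Z\bigr)^d$, the ratio is $\le 1$ because the part of $\pi$ inside the unit cube fits inside a fundamental domain of $\pi_\Z$ (whose vertices are integral), and equality forces $\pi$ to be spanned by $0$--$1$ vectors with disjoint supports, i.e.\ $\pi$ encodes a set partition of the indices. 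These surviving lower-dimensional partition planes do \emph{not} vanish --- if every clustering contribution were suppressed the $k$-th moment would be $\sigma^k$ rather than $\sum_l S(k,l)\sigma^l$ --- and they are exactly the diagonal strata that make the limit Poissonian. The role of the shift is the second point you flag: averaging over $V\in\Xi^d$ kills a partition plane unless $\Xi\subset\pi$, because translating the cube transversally to $\pi$ strictly decreases the cross-sectional volume and a definite per-coordinate loss compounds to an exponential loss in $d$. For a partition plane, $\Xi\subset\pi$ says precisely that $j,j'$ may share a block only if $\xi^j=\xi^{j'}$; this is the rigorous form of your ``only the coincidence pattern survives'' claim, and without it the reduction to a single Poisson process on boxes indexed by layers is unjustified (consider rationally dependent but unequal $\xi^j$, where $\Xi$ is a proper subgroup coupling the offsets nontrivially).
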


In effect, this Theorem says that as the number of rotations tends to infinity, the gap lengths exhibit random behavior. However, for every finite $d$ and $\Xi$ (with $n\ge2$) we have that $\P_{n,\bm\sigma, \Xi,\bm\tau/\Xi}^{(d)}$ is dependent.


\bigskip

\textbf{Acknowledgements. }I would like to thank Ya. Sinai, E. Lindenstrauss, J. Marklof, F. Cellarosi, A. Salehi Golsefidy, and Z. Wang for helpful discussions.

\section{Large $N$ limit}

\begin{proof}[Proof of Theorem 1]
We reformulate the problem in the language of homogeneous spaces. Let $L=\sldr \ltimes(\R^d)^{\oplus n}$ and let $\Lambda=\sldz\ltimes(\Z^d)^{\oplus n}\subset L$. Multiplication law on $L$ is given by $$(\sns M,\bm v^1,\dots,\bm v^n)(\sns N,\bm w^1,\dots,\bm w^n)=(\sns M\sns N,\bm v^1+\sns M\bm w^1,\dots ,\bm v^n+\sns M\bm w^n).$$ It is well-known that $\Lambda\subset L$ is a non-cocompact lattice. The homogeneous space $L/\Lambda$ is a bundle over $\sldr/\sldz$ with fiber $(\T^d)^{\oplus n}$.

Given a set of vectors $\bm v^1,\dots,\bm v^n\in\T^d$, let $$L_V=\{(1,\bm v^1,\dots,\bm v^n)^{-1}(\sns M,0,\dots,0)(1,\bm v^1\dots,\bm v^n)\mid \sns M\in\sldr\}\subset L;$$ it is of course isomorphic to \sldr. Also define $\hat L_V$ to be the smallest group containing $L_V$ that is defined over \Q. Dimension of $\hat L_V$ depends on $\bm v^j$.  If all vectors $\bm v^j$ have rational coordinates, then $\hat L_V= L_V$. Otherwise the fiber over the identity in $\hat L_V$ is the smallest \Q-vector space containing the identity fiber for $L_V$. This construction can be  carried to other points. 
Finally set $\hat\Lambda_V=\hat L_V\cap\Lambda$ which is a lattice in $\hat L_V$ by construction. 

For our purposes fix $\bm v^j=(0,\dots,0,\xi^j)^T$. For this choice of $\bm v^j$ we get the homogeneous space $\hat L_V/\hat\Lambda_V$. We have constructed $\hat L_V$ so that $\pi(\hat L_V)=\overline{\pi(L_V)}$, where $\pi\colon L\to L/\Lambda$ is the canonical projection. The structure of this space depends on  $\Xi=\overline{\Z\bm\xi}\subset\T^n.$ It is a subbundle of $L/\Lambda$: the base is still $\sldr/\sldz$ but the fiber is $\Xi^d$ after reordering coordinates. 

We define $f_{\bm\tau,\bm\sigma}\colon \hat L_V/\hat \Lambda_V\to\Rn$ by
$$f_{\bm\tau,\bm\sigma}(\sns M,\bm v^1,\dots,\bm v^n)=(g_{\tau^1,\sigma^1}(\sns M,\bm v^1),\dots,g_{\tau^n,\sigma^n}(\sns M,\bm v^n)),$$ where $$g_{\tau,\sigma}(\sns M,\bm v)=\sum_{\bm m\in \Z^d\setminus\{0\}} \chi_1(\tilde{ m}_1)\dots\chi_1(\tilde{m}_{d-1})\chi_{(\tau,\tau+\sigma)}(\tilde{m}_d),$$ $$\chi_\sigma(x)=
\begin{cases}1&x\in(0,\sigma)\\
0&\text{otherwise,}
\end{cases}$$ and $$\tilde{\bm m}=\sns M\bm m+\bm v.$$ It is easily seen that $f$ is $\hat\Lambda_V$ invariant and hence well-defined on the quotient. 


We need to show that $\leb\{f_{\bm\tau,\bm\sigma}=(A^1,\dots,A^n)\}\to\P^{(d)}_{n,\bm\sigma,\Xi,\bm\tau/\Xi}(A^1,\dots,A^n)$ as $N\to\infty$. To this end we use a theorem of Shah (Theorem 1.4 in \cite{shah_limit_1996}). The form we need is the following:

\begin{theorem}[Shah]\label{th:shah}
Let $$U_{\bm\alpha}=\begin{pmatrix}
 1\\
&\ddots\\
&&1\\
\alpha_1&\dots&\alpha_{d-1}&1
\end{pmatrix}\quad\text{and}\quad
\Phi_t=\begin{pmatrix}
e^{-t}\\
&\ddots\\
&&e^{-t}\\
&&&e^{(d-1)t}
\end{pmatrix}.$$ Let $L$ be a Lie group, $\Lambda\subset L$ a lattice, $\phi\colon \sldr\to L$ an embedding. If the image of $\phi$ is dense when projected to $L/\Lambda$, then for any bounded continuous $\eta$ \beq\label{eq:shah}\lim_{t\to\infty}\int_{\R^{d-1}}\eta(\phi(\Phi_t U_{\bm\alpha}))d\nu(\bm\alpha)=\int_{L/\Lambda} \eta(\sns M)d\mu(\sns M),\eeq where $\nu$ is any absolutely continuous probability measure on $U_{\bm\alpha}$ and  $\mu$ is the Haar probability measure on $L/\Lambda$.
\end{theorem}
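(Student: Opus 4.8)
The plan is to read the left-hand side of \eqref{eq:shah} as the integral of $\eta$ against the family of probability measures $\mu_t$ on $L/\Lambda$ obtained by pushing $\nu$ forward under the map $\bm\alpha\mapsto\phi(\Phi_tU_{\bm\alpha})\Lambda$, and to prove that \emph{every} weak-$*$ limit of $\mu_t$ as $t\to\infty$ is the Haar measure $\mu$. Since $L/\Lambda$ is noncompact, the first task is tightness: one must preclude escape of mass, so that subsequential limits are genuine probability measures. I would get this from the Dani--Margulis non-divergence estimates. Rewriting $\Phi_tU_{\bm\alpha}=U_{e^{dt}\bm\alpha}\Phi_t$ exhibits $\mu_t$ as supported on an expanding translate of the $U$-orbit through $\phi(\Phi_t)\Lambda$, where $U=\{U_{\bm\alpha}\}\cong\R^{d-1}$; the polynomial ($(C,\alpha)$-good) behaviour of unipotent orbits then supplies a single compact set carrying all but $\eps$ of the mass uniformly in $t$, the degenerate alternative being ruled out by the hypothesis that $\phi(\sldr)$ projects densely.

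The heart of the argument is to show that a weak-$*$ limit $\mu$ is invariant under the unipotent subgroup $\phi(U)$. Here I exploit the contraction produced by $\Phi_t$: a direct computation on the bottom row gives $\Phi_t^{-1}U_{\bm\beta}\Phi_t=U_{e^{-dt}\bm\beta}$, whence, using that $U$ is abelian,
$$U_{\bm\beta}\,\Phi_tU_{\bm\alpha}=\Phi_t\,U_{e^{-dt}\bm\beta}U_{\bm\alpha}=\Phi_t\,U_{\bm\alpha+e^{-dt}\bm\beta}.$$
Thus left translation of $\mu_t$ by $\phi(U_{\bm\beta})$ equals the measure obtained by shifting the base point $\bm\alpha$ by $e^{-dt}\bm\beta\to0$. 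Because $\nu$ is absolutely continuous, translation is continuous in the $L^1$ norm of its density, so the translated and untranslated measures share the same limit and $\mu$ is $\phi(U)$-invariant. This is precisely where absolute continuity of $\nu$ enters. The relation $\phi(\Phi_s)\mu_t=\mu_{t+s}$ shows in the same manner that $\mu$ is in addition invariant under the diagonal flow $\phi(\{\Phi_s\})$.

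With unipotent invariance secured I would invoke Ratner's measure classification theorem: $\mu$ is a convex combination of homogeneous probability measures, each supported on a closed orbit of a closed subgroup $H\supseteq\phi(U)$ for which the corresponding intersection with $\Lambda$ is a lattice. The final and hardest step is to eliminate every proper $H\subsetneq L$ from this decomposition, forcing $\mu$ to be $L$-invariant and hence Haar. For this I would run the Dani--Margulis linearization method: for each of the countably many relevant proper subgroups $H$, the set of base points $\bm\alpha$ whose expanding translate spends a non-negligible proportion of ``time'' near the tube around the orbit of $H$ is controlled, via a suitable linear representation in which the trajectory is polynomial, and its $\nu$-measure tends to $0$ as the tube shrinks---unless the whole $\phi(\sldr)$-orbit were trapped in one such proper homogeneous subspace, which the density hypothesis forbids. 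The $\phi(\{\Phi_s\})$-invariance established above is what makes ``most of the time'' the correct notion in this count. I expect this avoidance-of-proper-subgroups step to be the main obstacle, since it demands both the quantitative non-divergence/linearization estimates and a careful use of density to certify that no proper homogeneous subspace can absorb positive mass; granting it, uniqueness of the $L$-invariant probability measure identifies the limit with Haar measure and yields \eqref{eq:shah}.
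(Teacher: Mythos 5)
The first thing to say is that the paper contains no proof of this statement at all: it is imported verbatim as Theorem 1.4 of Shah \cite{shah_limit_1996} and used as a black box in the proof of Theorem 1. So there is no internal argument to compare yours against; the honest comparison is with Shah's original proof, and your outline is essentially a reconstruction of it. The skeleton you propose --- pushforward measures $\mu_t$, tightness via Dani--Margulis non-divergence, invariance of weak-$*$ limits under $\phi(U)$ using the contraction $\Phi_t^{-1}U_{\bm\beta}\Phi_t=U_{e^{-dt}\bm\beta}$ together with $L^1$-continuity of translation acting on the density of $\nu$, then Ratner's measure classification, then linearization to exclude proper subgroups with the density hypothesis ruling out trapping --- is exactly the standard route, the conjugation algebra you quote is correct, and you correctly identify both where absolute continuity of $\nu$ enters and where the real difficulty lies.

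Two caveats. First, the claim that subsequential limits are $\phi(\{\Phi_s\})$-invariant ``in the same manner'' does not follow as stated: from $\phi(\Phi_s)_*\mu_t=\mu_{t+s}$ you can only conclude that the limit of $\mu_{t_k+s}$ is the pushforward of the limit of $\mu_{t_k}$, and a priori different subsequences may have different limits, so this argument is circular unless convergence of the full family is already known. Fortunately this invariance is not needed: Shah's linearization argument runs on unipotent invariance alone, so you should simply drop that claim rather than repair it. Second, a point worth a sentence if this were written out: to invoke Ratner's theorem you need $\phi(U)$ to consist of Ad-unipotent elements of $L$, which is true but requires an argument (the generator of each one-parameter subgroup of $U$ is a nilpotent element of $\mathfrak{sl}_d$, and finite-dimensional $\mathfrak{sl}_2$-representation theory applied to the adjoint action forces its image under $d\phi$ to be ad-nilpotent). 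Beyond that, the two genuinely hard ingredients (quantitative non-divergence and the avoidance-of-singular-sets estimate) are invoked rather than proved, so what you have is a correct proof outline whose substance lives in the cited machinery --- which is a defensible position for a known deep theorem, and is in spirit what the paper itself does by citing Shah.
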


\begin{remark}
In effect, the Theorem says that the unstable manifold $U_{\bm\alpha}$ is equidistributed in the larger homogeneous space $L/\Lambda$ provided the density assumption is satisfied. 
\end{remark}

We set $N=e^{(d-1)t}$ and apply the Theorem \ref{th:shah} with $L=\hat L_V$, $\Lambda=\hat\Lambda_V$, and $$\phi\colon \sns M\mapsto (1,\bm v^1,\dots,\bm v^n)^{-1}(\sns M,0,\dots,0)(1,\bm v^1,\dots,\bm v^n).$$ This ensures density after projecting to $\hat L_V/\hat\Lambda_V$ by construction. We now use the following elementary Lemma to construct appropriate functions $\eta$.

\begin{lemma}
Let $\N_0=\N\cup\{0\}$. Then $p_2\colon \N_0\times\N_0\to \N_0$ given by $$(x,y)\mapsto \binom{x+y+2}{2}-(y+1)$$ is a bijection. 
\end{lemma}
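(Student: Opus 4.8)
The plan is to establish that the map $p_2(x,y)=\binom{x+y+2}{2}-(y+1)$ is a bijection from $\N_0\times\N_0$ onto $\N_0$ by recognizing it as the standard diagonal (Cantor-style) pairing enumeration of the lattice $\N_0\times\N_0$. The key observation is that $\binom{x+y+2}{2}$ groups the lattice points by the value of the antidiagonal sum $s=x+y$: for fixed $s$, there are exactly $s+1$ points $(x,y)$ with $x+y=s$, and these occupy a consecutive block of $s+1$ integers. First I would verify the block structure: the points with $x+y<s$ number $\sum_{k=0}^{s-1}(k+1)=\binom{s+1}{2}=\binom{s+2}{2}-(s+1)$, which equals the smallest value $p_2$ takes on the diagonal $x+y=s$ (attained at $y=s$, $x=0$). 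Then along a fixed antidiagonal, as $y$ runs from $s$ down to $0$ (equivalently $x$ from $0$ up to $s$), the term $-(y+1)$ increases by $1$ at each step while $\binom{s+2}{2}$ stays constant, so $p_2$ takes precisely the consecutive values $\binom{s+2}{2}-(s+1),\dots,\binom{s+2}{2}-1$.

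Concretely, I would prove injectivity and surjectivity simultaneously by exhibiting, for each $s\ge 0$, that $p_2$ maps the antidiagonal $\{(x,y):x+y=s\}$ bijectively onto the integer interval $\bigl[\binom{s+1}{2},\binom{s+2}{2}\bigr)$. Since these intervals partition $\N_0$ as $s$ ranges over $\N_0$ (their endpoints are the consecutive triangular numbers $0,1,3,6,10,\dots$), and the antidiagonals partition $\N_0\times\N_0$, the map is a bijection. The main content is simply the identity $\binom{s+2}{2}-(y+1)$ with $x=s-y$, which upon substituting $s=x+y$ reproduces the stated formula; the range claim then follows because $y+1$ ranges over $\{1,\dots,s+1\}$ exactly when $(x,y)$ ranges over the antidiagonal.

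None of this is difficult — the statement is flagged as an elementary Lemma and the only genuine step is bookkeeping with triangular numbers. The mild obstacle, if any, is orienting the antidiagonal correctly: the term $-(y+1)$ means that within a diagonal block the values \emph{decrease} as $y$ increases, so one must be careful that the union of half-open intervals $\bigl[\binom{s+1}{2},\binom{s+2}{2}\bigr)$ tiles $\N_0$ without gaps or overlaps, which reduces to the telescoping fact that consecutive triangular numbers differ by $s+1$, exactly the number of points on the antidiagonal. An alternative, perhaps cleaner, route is to write down the inverse explicitly: given $z\in\N_0$, let $s$ be the unique integer with $\binom{s+1}{2}\le z<\binom{s+2}{2}$, set $y=\binom{s+2}{2}-1-z$ and $x=s-y$, and check that $p_2(x,y)=z$ and that this $(x,y)$ lies in $\N_0\times\N_0$; producing a two-sided inverse immediately yields bijectivity.
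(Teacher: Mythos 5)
Your proof is correct and complete: the identity $\binom{s+2}{2}-(s+1)=\binom{s+1}{2}$ shows that $p_2$ maps each antidiagonal $\{x+y=s\}$ bijectively onto the interval $\bigl[\binom{s+1}{2},\binom{s+2}{2}\bigr)$ of consecutive triangular numbers, and these intervals tile $\N_0$. The paper offers no proof of this lemma (it is flagged as elementary and used only to build the bijections $p_n\colon\N_0^n\to\N_0$ by induction), and your argument is precisely the standard Cantor-pairing enumeration that the formula encodes, so there is nothing to compare against.
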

By induction, there exists a polynomial bijection between $\N_0^n$ and $\N_0$ for each $n$; call it $p_n$. Set $$h_n(\sns M,\bm x^1,\dots,\bm x^n)=p_n(g_{\tau^1,\sigma^1}(\sns M,\bm x^1),\dots,g_{\tau^n,\sigma^n}(\sns M,\bm x^n))$$ and apply Shah's Theorem to functions $$\eta_{\bm A}(\sns N,\bm y^1,\dots, \bm y^n)=\chi_{\{h_n(\sns M,\bm x^1,\dots,\bm x^n)=p_n(A^1,\dots,A^n)\}}((1,\bm v^1,\dots,\bm v^n)(\sns N,\bm y^1,\dots, \bm y^n))$$ for all nonnegative integers $A^j$. These functions are not continuous, but are indicators of nice sets.  Using a standard approximation argument we can apply the Theorem to them as well. For $\sns M$ of the form $$\sns M=
\begin{pmatrix}
N^{-1/(d-1)}\\&\ddots\\&&N^{-1/(d-1)}\\&&&N
\end{pmatrix}
\cdot
\begin{pmatrix}
 1\\
&\ddots\\
&&1\\
\alpha_1&\dots&\alpha_{d-1}&1
\end{pmatrix},
$$ we recover the numbers of points in the $n$ segments. In fact, for all $\sns M$ 
$$\eta_{\bm A}((1,\bm v^1,\dots,\bm v^n)^{-1}(\sns M,0,\dots,0)(1,\bm v^1,\dots,\bm v^n))=\chi_{\{h_n(\sns M,\bm x^1,\dots,\bm x^n)=p_n(\bm A)\}}(\sns M,\sns M\bm v^1,\dots,\sns M\bm v^n)$$ and $$g_{\tau,\sigma}(\sns M,\sns M\bm v)=\sum_{\bm m\in\Z^d\setminus\{0\}} \chi_1\dots\chi_1\chi_{(\tau,\tau+\sigma)}(\sns M(\bm m+\bm v)),$$ and for  the particular choice of $\sns M$ above, 
$$\sns M(\bm m+\bm v)=
\begin{pmatrix}
m_1/N^{-1/(d-1)}\\
\vdots\\
m_{d-1}/N^{-1/(d-1)}\\
(m_1\alpha_1+\dots+m_{d-1}\alpha_{d-1}+\xi)N
\end{pmatrix}.
$$
Thus, $\eta_{\bm A}(\phi(\sns M))=1$ if and only if there are exactly $A^1,\dots,A^n$ visits to the segments around $\xi^1,\dots,\xi^n$ for the given $\bm\alpha$ and $N$. 
Hence the form of the limiting distribution is \beq\P_{n,\bm\sigma, \Xi,\bm\tau/\Xi}^{(d)}(A^1,\dots,A^n)=\mu_{\hat L_V/\hat\Lambda_V}\{f_{\bm\tau,\bm\sigma}(\sns M,\bm x^1,\dots,\bm x^n)=(A^1,\dots,A^n)\}.\label{eq:form}\eeq

\end{proof}

\section{Large $d$ Limit}

In this section we consider the large $d$ limit of the distributions from the previous sections and prove Theorem 2. Before proving the theorem, we will need basic information about the Poisson distribution. 

Poisson distribution with parameter $\sigma$ weighs each non-negative integer $k$ with weight $e^{-\sigma}\sigma^k/k!$ We will denote  Poisson distribution with parameter $\sigma$ by $\Pois{\sigma}$. Its moments have the form 
$$\sum_{k=0}^\infty k^n e^{-\sigma}\frac{\sigma^k}{k!}=e^{-\sigma}\left(\sigma\frac{d}{d\sigma}\right)^n e^\sigma=\sum_{k=1}^nS(n,k)\sigma^k,$$ where $S(n,k)$ is the Stirling number of the second kind. As can be easily seen from the above equality, the Stirling number is the number of partitions of a set of $n$ elements into $k$ nonempty sets. The first few moments of the Poisson distribution are $\sigma$, $\sigma^2+\sigma$, $\sigma^3+3\sigma^2+\sigma$. These correspond to partitions $\{1\}$; $\{12\}$, $\{11\}$; $\{123\}$, $\{112\}$, $\{121\}$, $\{211\}$, $\{111\}$.

To further study the limiting distributions we have obtained, we will need the following generalization of a proposition of Marklof from \cite{marklof_2000} which goes back to a theorem of Rogers \cite{rogers_moments_1955}. Let $\Gr(n,l)=O(n)/(O(l)\times O(n-l))$ denote the Grassmannian of $l$-planes in \Rn; we assume that the $l$-planes are embedded in \Rn\ with respect to the standard basis. Let $\Gr(n,l)(\Q)=\{\pi\in\Gr(n,l)\mid\pi\subset\Rn\text{ is defined over }\Q\}=\{\pi\in\Gr(n,l)\mid \pi\cap\Z^n\mbox{ is a lattice in }\pi\}$. For $\pi\in\Gr(n,l)(\Q)$, we write $\covol \pi_\Z$ for the covolume of the lattice $\pi_\Z=\pi\cap \Z^n$ in $\pi$. We also set $G=\sldr$, $\Gamma=\sldz$, and fix $\mu$ to be the Haar probability measure on $G/\Gamma$. 

\begin{theorem}\label{th:moments}
Let $F\colon(\R^d)^{\oplus r}\to\R$ be a bounded piecewise continuous function with compact support. Let $f\colon G/\Gamma\to \R$ be defined by $$f(\sns M)=\sum_{\bm m^1,\dots,\bm m^r\in\Z^d}F(\sns M\bm m^1,\dots,\sns M\bm m^r)$$ with $r<d$ a positive integer. Then, the first moment of $f$ is given by the following expression: 
\beq\label{eq:moments}\int_{G/\Gamma} f(\sns M)d\mu (\sns M)=\sum_{l=0}^r\sum_{\pi\in\Gr(r,l)(\Q)}\int_{x\in\pi'} F(x)\frac{dx}{(\covol \pi_\Z)^d},\eeq 
where $\pi'\in\Gr(rd,ld)(\Q)$ is the image of $\pi$ under the embedding $$(x^1,\dots,x^r)\mapsto(x^1,\dots,x^1,\dots,x^r,\dots,x^r)$$ and the measure $dx$ is  the Lebesgue measure on $\pi'$ that should be interpreted as the delta measure at the origin when $l=0$. 

\end{theorem}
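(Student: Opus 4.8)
The plan is to compute the mean value by the unfolding technique, organizing the lattice sum by rank. I would first view a tuple $(\bm m^1,\dots,\bm m^r)$ as the columns of an integer matrix $\bm m\in\Z^{d\times r}$, so that $f(\sns M)=\sum_{\bm m\in\Z^{d\times r}}F(\sns M\bm m)$ with $\sns M\bm m$ ordinary matrix multiplication. The decisive elementary remark is that left multiplication by the invertible matrix $\sns M$ preserves the \emph{row space} of $\bm m$: if $\pi\subseteq\R^r$ denotes the span of the $d$ rows of $\bm m$, then $\sns M\bm m$ has the same row space $\pi$ for every $\sns M\in\sldr$. As $\bm m$ is integral, $\pi$ lies in $\Gr(r,l)(\Q)$ with $l=\rk\bm m\le r<d$, and moreover $\sns M\bm m$ always lies in the $ld$-dimensional subspace $\pi'=\pi\otimes\R^d\subseteq(\R^d)^{\oplus r}$ appearing on the right of \eqref{eq:moments}. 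I would accordingly split $\Z^{d\times r}=\bigsqcup_{l=0}^{r}\bigsqcup_{\pi\in\Gr(r,l)(\Q)}\{\bm m:\ \mathrm{rowspace}(\bm m)=\pi\}$ and treat each block separately.

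For a fixed $\pi$ I would choose an integer basis $\bm b_1,\dots,\bm b_l$ of the rank-$l$ lattice $\pi_\Z=\pi\cap\Z^r$, forming the $l\times r$ matrix $B$. Each $\bm m$ with row space $\pi$ then factors uniquely as $\bm m=AB$ with $A\in\Z^{d\times l}$ of rank $l$, and the columns $\bm a_1,\dots,\bm a_l$ of $A$ range exactly over the ordered linearly independent $l$-frames in $\Z^d$. Setting $G(\bm z_1,\dots,\bm z_l)=F(ZB)$ with $Z=(\bm z_1,\dots,\bm z_l)$, the $\pi$-block contributes $\int_{G/\Gamma}\sum G(\sns M\bm a_1,\dots,\sns M\bm a_l)\,d\mu$, the sum over linearly independent frames. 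Because $l<d$, this is precisely the quantity evaluated by the linearly-independent-frame case of Rogers' mean value formula \cite{rogers_moments_1955}, namely $\int_{(\R^d)^l}G(Z)\,dZ=\int_{(\R^d)^l}F(ZB)\,dZ$. Finally $Z\mapsto ZB$ is a linear isomorphism onto $\pi'$ acting as the same injection $w\mapsto wB$ on each of the $d$ rows of $Z$; each row map $\R^{l}\to\pi$ scales $l$-dimensional volume by $\sqrt{\det(BB^{\mathsf T})}=\covol\pi_\Z$, so the total Jacobian is $(\covol\pi_\Z)^{d}$ and $\int_{(\R^d)^l}F(ZB)\,dZ=(\covol\pi_\Z)^{-d}\int_{\pi'}F\,dx$. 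This is exactly the $\pi$-summand of \eqref{eq:moments}; the rank-$0$ block is the single matrix $\bm m=0$, giving $F(0)$ and matching the $l=0$ delta term.

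The algebraic identity $\sqrt{\det(BB^{\mathsf T})}=\covol\pi_\Z$ and the independence of the answer from the basis $B$ are routine. The substance, and the main obstacle, is analytic. Rogers' formula is stated for sufficiently regular integrands, so I would first pass to bounded, compactly supported, piecewise continuous $F$ by a standard approximation (noting that $G=F(\,\cdot\,B)$ inherits these properties). The more delicate point is to interchange $\int_{G/\Gamma}$ with the infinite sum over $\pi\in\Gr(r,l)(\Q)$ and to prove the resulting series converges. I would establish the identity first for $F\ge0$, where Tonelli's theorem renders the interchange automatic and turns it into an equality in $[0,\infty]$; the right-hand side is then finite because $\int_{\pi'}F\,dx$ is bounded uniformly in $\pi$ (by $\|F\|_\infty$ times the volume of a fixed ball in dimension $ld$) while $\sum_{\pi\in\Gr(r,l)(\Q)}(\covol\pi_\Z)^{-d}$ converges—this height-type zeta sum is finite precisely when $r<d$, which is where the hypothesis is used. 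The general $F$ then follows by decomposing into positive and negative parts.
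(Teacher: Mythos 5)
Your proposal is correct and follows the same skeleton as the paper's own proof: decompose the tuples $(\bm m^1,\dots,\bm m^r)$ according to the rational subspace $\pi\subset\R^r$ spanned by the rows of the $d\times r$ matrix they form, write each such tuple as $AB$ where the rows of $B$ are a $\Z$-basis of $\pi_\Z$ and $A$ ranges over linearly independent integer $l$-frames in $\Z^d$, reduce to the mean value of a sum over linearly independent frames, and account for the change of variables by the Jacobian $(\covol\pi_\Z)^d$; your identification of $\pi'$ as the row-wise image and the treatment of the $l=0$ term as $F(0)$ both match. The one place you genuinely diverge is the key Lemma: you invoke the linearly-independent-frame case of Rogers' mean value formula, $\int_{G/\Gamma}\sum_{\mathrm{l.i.}}G(\sns M\bm a^1,\dots,\sns M\bm a^l)\,d\mu=\int_{(\R^d)^l}G$, as a black box, whereas the paper spends most of its effort proving exactly this statement from scratch --- unfolding the sum via a Hermite-normal-form factorization $(\bm m^1/t^1,\dots,\bm m^r/t^r)=\sns N A$, computing the stabilizer $\Gamma_A$, passing to explicit coordinates on $G/\Gamma_A$, and verifying through the identity $\sum_{n\ge1}\phi_k(n)/n^d=\zeta(d-k)/\zeta(d)$ that the accumulated arithmetic constants equal $1$. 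Since the paper itself attributes the Lemma to Rogers, your citation is legitimate and nothing essential is missing; you buy brevity at the cost of self-containedness, while the paper's unfolding independently certifies the normalization. Your closing remarks on convergence (Tonelli for $F\ge0$, the uniform bound $\int_{\pi'}|F|\,dx\le\|F\|_\infty\vol(B_R^{(ld)})$, and finiteness of $\sum_{\pi}(\covol\pi_\Z)^{-d}$ precisely because $r<d$) are in fact more explicit than the paper, which interchanges the sum over $\pi$ with the integral by appealing only to $\Gamma$-invariance.
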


\begin{remark}\label{rem:coordinateplanes}
If in the sum over $\bm m^j$ we omit the terms where any of the $\bm m^j$ are $\bm 0$, then in the sum over $\pi\in\Gr(r,l)(\Q)$ we omit planes that are generated by subsets of the standard basis. This follows from the fact that such subsets of $(\Z^d)^r$ are \sldz-invariant. 
\end{remark}

\begin{lemma}
With notation as in the Theorem, we have $$\int_{G/\Gamma}\sum_{\substack{\bm m^1,\dots,\bm m^r\in\Z^d\\ \text{ linearly indep.}}}F(\sns M\bm m^1,\dots, \sns M\bm m^r)d\mu(\sns M)=\int_{\bm x^j\\\in\R^d}F(\bm x^1,\dots, \bm x^r)d\bm x^1\dots d\bm x^r.$$
\end{lemma}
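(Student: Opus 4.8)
The plan is to prove the lemma by specializing the moment formula in Theorem \ref{th:moments} to the case where the summation over $\bm m^1,\dots,\bm m^r$ is restricted to linearly independent tuples, and then extracting the single surviving term on the right-hand side. By Remark \ref{rem:coordinateplanes}, omitting tuples with some $\bm m^j=\bm 0$ corresponds to dropping the coordinate planes from the sum over $\pi\in\Gr(r,l)(\Q)$. The key observation is that requiring full linear independence of the $r$ vectors is a far stronger constraint: it kills \emph{every} $\pi$ except the full space $\pi=\R^r$ (the unique element of $\Gr(r,r)(\Q)$), for which $\covol\pi_\Z=1$ and the associated integral is simply $\int_{(\R^d)^{\oplus r}}F$.

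First I would make precise how linear independence interacts with the rank stratification underlying \eqref{eq:moments}. In the derivation of Theorem \ref{th:moments}, the index $l$ records the rank of the sublattice of $\Z^r$ spanned by the ``pattern'' of the tuple $(\bm m^1,\dots,\bm m^r)$, i.e. the rank of the linear relations among them after the \sldz-action is averaged out. Concretely, each $\pi\in\Gr(r,l)(\Q)$ collects those tuples whose $\Z$-linear dependencies are governed by a fixed rational $l$-plane; tuples that are genuinely linearly independent over $\R$ admit no nontrivial $\Q$-linear relation and hence contribute only to the top stratum $l=r$. I would verify that for $l<r$ the linearly independent tuples make no contribution, so that the entire sum over $l$ and over $\pi$ collapses to the single term $l=r$, $\pi=\R^r$.

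Next I would identify the surviving term explicitly. For $l=r$ we have $\Gr(r,r)(\Q)=\{\R^r\}$, with $\pi_\Z=\Z^r$, so $\covol\pi_\Z=1$ and $(\covol\pi_\Z)^d=1$. The embedding $\pi\mapsto\pi'$ sends $\R^r$ into $\Gr(rd,rd)(\Q)=\{(\R^d)^{\oplus r}\}$, so $\pi'=(\R^d)^{\oplus r}$ and the Lebesgue measure $dx$ on $\pi'$ is just $d\bm x^1\dots d\bm x^r$. Substituting into \eqref{eq:moments} then yields exactly
$$\int_{(\R^d)^{\oplus r}}F(\bm x^1,\dots,\bm x^r)\,d\bm x^1\dots d\bm x^r,$$
which is the right-hand side claimed in the lemma.

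I expect the main obstacle to be the bookkeeping that justifies why \emph{only} the top stratum survives rather than, say, proper rational subspaces also catching some independent tuples. The subtle point is that linear independence over $\R$ precludes rational relations, but one must confirm that the stratification in Theorem \ref{th:moments} is indexed by the rank of the space of relations (equivalently by the rank of the tuple as a map $\Z^r\to\Z^d$) and that the $l<r$ strata collect exactly the tuples of deficient rank. Once this identification is in place—most cleanly by re-reading the proof of Theorem \ref{th:moments} and isolating which $\bm m$-tuples feed each $\pi$—the conclusion follows immediately, since a linearly independent tuple has rank $r$ and thus lands only in $l=r$.
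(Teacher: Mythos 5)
Your argument is circular within the paper's logic: the Lemma is not a corollary of Theorem \ref{th:moments} but the key ingredient in its proof. The paper establishes the Lemma first, by a direct computation, and only then proves Theorem \ref{th:moments} by stratifying the sum over tuples according to the rank $l$ and the rational $l$-plane $\pi$ spanned, parametrizing $\pi\cap\Z^r$ by a basis matrix $B$ with $B(\Z^l)=\pi\cap\Z^r$, and then \emph{applying the Lemma} to the resulting sum over linearly independent $l$-tuples $\bm n^1,\dots,\bm n^l$. Your proposed derivation --- restrict \eqref{eq:moments} to linearly independent tuples and observe that only the stratum $l=r$, $\pi=\R^r$ with $\covol\pi_\Z=1$ survives --- is precisely the observation that the top stratum of Theorem \ref{th:moments} \emph{is} the Lemma; it correctly identifies the two statements but proves neither. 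Unless you supply an independent proof of \eqref{eq:moments} (say by quoting Rogers' mean value theorem as a black box, which is not the route the paper takes), the argument has no content.

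The actual work in the Lemma, which your proposal never touches, is a Siegel-type unfolding. One normalizes $\mu(G/\Gamma)=\prod_{k=2}^d\zeta(k)$, writes each linearly independent tuple as $\bm m^j=t^j\,\sns N A\,\bm e_j$ with $t^j=\gcd\bm m^j$, $\sns N\in\sldz$, and $A$ an upper-triangular Hermite-type integer matrix, counts the admissible $A$ with fixed diagonal via the functions $\phi_{j-1}(a_{jj})$, computes the stabilizer $\Gamma_A$, unfolds $\int_{G/\Gamma}\sum_{\sns N\in\Gamma/\Gamma_A}$ into $\int_{G/\Gamma_A}$, passes to the explicit coordinates of \eqref{eq:measure} to turn the orbit integral into a Lebesgue integral, and finally checks that the arithmetic factor $\sum_{t^j}(t^1\cdots t^r)^{-d}\sum_{a_{jj}}\prod_j\phi_{j-1}(a_{jj})a_{jj}^{-d}$ exactly cancels the normalization $\zeta(d)\cdots\zeta(d-r+1)$. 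That computation is the proof; it cannot be replaced by bookkeeping about which strata of \eqref{eq:moments} catch the independent tuples.
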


\begin{proof}
First note that the integral is well-defined since linearly independent sets of vectors are preserved by $\Gamma$. Further renormalize $\mu$ so that $\mu(G/\Gamma)=\prod_{k=2}^d\zeta(k)$ for $d\ge 2$ and write $d\mu(\sns M)/\mu(G/\Gamma)$ in the integral; this normalization will be useful later. Write 
$$\sns M=\begin{pmatrix}
          x_{11}&\dots&x_{1d}\\
\vdots&\ddots&\vdots\\
x_{d1}&\dots&x_{dd}
\end{pmatrix}\in G/\Gamma.$$ Then for $1\le r< d$ we have $$\sns M=\left(\begin{array}{c|c}
\begin{matrix}x_{11}&\dots&x_{1r}\\
\vdots&\ddots&\vdots\\
x_{r1}&\dots&x_{rr}
\end{matrix}
&
0_{r\times(d-r)}\\[0.4em] \hline \\[-1em]
\begin{matrix}x_{r+1,1}&\dots&x_{r+1,r}\\
x_{r+2,1}&\dots&x_{r+2,r}\\
\vdots&\ddots&\vdots\\
x_{d1}&\dots&x_{dr}\end{matrix}&
\begin{matrix}\det^{-1}(x_{ij})_{i,j\le r}&0&\dots&0\\
 0&1\\
\vdots&&\ddots\\
0&&&1
\end{matrix}
\end{array}\right)
\cdot
\left(\begin{array}{c|c}
 \mathrm{Id}_{r\times r}
&\begin{matrix}
z_{11}&\dots&z_{1,d-r}\\
\vdots&\ddots&\vdots\\
z_{r1}&\dots&z_{r,d-r}
\end{matrix}\\ \hline
0_{(d-r)\times r}&\begin{matrix}
y_{11}&\dots&y_{1,d-r}\\
\vdots&\ddots&\vdots\\
y_{d-r,1}&\dots&y_{d-r,d-r}
\end{matrix}
\end{array}\right)
$$ where $(y_{ij})\in \mathrm{SL}(d-r,\R).$ In these coordinates \beq\label{eq:measure}d\mu=\prod_{\substack{i\le d\\ j\le d-r}}dx_{ij}\prod_{\substack{i\le r\\ j\le d-r}}dz_{ij}\cdot \delta(1-\det (y_{ij}))\prod_{i,j\le d-r}dy_{ij}.\eeq The last factor is the Haar measure on $\mathrm{SL}(d-r,\R)$ normalized to $\zeta(2)\dots\zeta(d-r)$ (or simply 1 in case $d-r=1$). 

For $j=1,\dots,r$ let $t^j=\gcd\bm m^j$. Writing $\bm m^j/t^j$ for a column vector, we can find $\sns N\in\sldz$ such that $$\begin{pmatrix}  \dfrac{\bm m^1}{t^1} &\dots&\dfrac{\bm m^r}{t^r}\end{pmatrix}=\sns N \begin{pmatrix}
a_{11}&a_{12}&\cdots&a_{1r}\\
0&a_{22}&\dots&a_{2r}\\
\vdots&\vdots&\ddots&\vdots\\
0&0&\cdots&a_{rr}\\
0&0&\cdots&0\\
\vdots&\vdots&\ddots&\vdots\\
0&0&\cdots&0
\end{pmatrix}=\sns NA.$$ $A$ is a matrix with integer entries uniquely determined by the following conditions: 
\begin{itemize}
 \item $a_{ij}$, $1\le i\le j$ are relatively prime for any fixed $j\in\{1,\dots,r\}$ (in particular, $a_{11}=1$);
\item $0\le a_{1j},\dots,a_{j-1,j} <a_{jj}$. 
\end{itemize}
The first condition is due to relative primality of $\bm m^j/t^j$, and the second comes from applying row operations. Given $a_{11}=1, a_{22},\dots,a_{rr}$, the number of possible matrices $A$ of this form is $$\prod_{j=1}^r\phi_{j-1}(a_{jj}),$$ where $\phi_k$ is the number-theoretic function defined by $$\phi_k(p^\eps)=p^{\eps k}\left(1-\frac1{p^k}\right)$$ for $k\ge 1$ and $\phi_0$ is identically 1; $\phi_1$ is Euler totient function. The function $\phi_k(n)$ counts the number of $k$-tuples $(n_1,\dots,n_k)\in\{0,\dots,n-1\}^k$ such that $\gcd(n, n_1, \dots, n_k)=1$. 

Let us compute the stabilizer of a fixed matrix $A$: 
$$\Gamma_A=\{ \gamma\in\Gamma\mid \gamma A=A\}=\left(\begin{array}{c|c} \mathrm{Id}_{r\times r}&\Z_{r\times(d-r)}\\[-0.7em] \\ \hline \\[-0.7em] 0_{(d-r)\times r}& \mathrm{SL}(d-r,\Z)\end{array}\right).$$ Thus we get 
\begin{multline}\frac{1}{\mu(G/\Gamma)}\int_{G/\Gamma}\sum_{\bm m^j\text{ l. i.}} F(\sns M\bm m^1,\dots, \sns M\bm m^r)d\mu(\sns M)=
\\ \frac1{\mu(G/\Gamma)}\sum_{t^1,\dots,t^r=1}^\infty \int_{G/\Gamma}\sum_{\sns N\in \Gamma/\Gamma_A}F(\sns M\sns N\begin{pmatrix}a_{11}\\0\\\vdots \\ 0\end{pmatrix}, \sns M\sns N\begin{pmatrix}a_{12}\\a_{22}\\\vdots \\ 0\end{pmatrix},\dots,\sns M\sns N\begin{pmatrix}a_{1r}\\\vdots \\ a_{rr}\\\vdots\end{pmatrix})d\mu(\sns M)=
\\\frac1{\mu(G/\Gamma)}\sum_{t^j}\int_{G/\Gamma_A}F(\sns M\begin{pmatrix}a_{11}\\0\\\vdots \\ 0\end{pmatrix}, \sns M\begin{pmatrix}a_{12}\\a_{22}\\\vdots \\ 0\end{pmatrix},\dots,\sns M\begin{pmatrix}a_{1r}\\\vdots \\ a_{rr}\\\vdots\end{pmatrix})d\mu(\sns M).\end{multline} Using \eqref{eq:measure} to change the measure we get $$\tfrac1{\zeta(d)\dots\zeta(d-r+1)}\sum_{t^j}\sum_A\!\int_{(\R^d)^r}\!\!\!F(t^1 \bm x^1, t^2a_{12}\bm x^1+t^2 a_{22} \bm x^2,\dots,t^ra_{1r}\bm x^1+\dots+ t^r a_{rr}\bm x^r)d\bm x^1\dots d\bm x^r.$$ Now we do a linear change of variables and get 
\beq\tfrac{1}{\zeta(d)\dots\zeta(d-r+1)}\sum_{t^j=1}^\infty\tfrac1{(t^1\dots t^r)^d}\!\!\sum_{a_{22},\dots,a_{rr}=1}^\infty\!\!\!\! \tfrac{\phi_1(a_{22})}{a_{22}^d}\dots \tfrac{\phi_{r-1}(a_{rr})}{a_{rr}^d}\cdot\!\!\int_{(\R^d)^r}\!\!\! F(\bm x^1,\dots, \bm x^r) d\bm x^1\dots d\bm x^r.\label{eq:phi}\eeq
It is easy to see that $$\sum_{n\ge 1}\frac{\phi_k(n)}{n^d}=\frac{\zeta(d-k)}{\zeta(d)},$$ whence the constant in front of the integral in \eqref{eq:phi} is 1, as desired.

\end{proof}

\begin{proof}[Proof of Theorem \ref{th:moments}] Rewrite the integral we are evaluating as 
$$\frac1{\mu(G/\Gamma)}\int_{G/\Gamma}\sum_{l=0}^r\sum_{\rk (\bm m^1, \dots, \bm m^r) = l}F(\sns M\bm m^1,\dots, \sns M\bm m^r)d\mu(\sns M).$$ Here we normalize $\mu$ like in the Lemma and the inner sum runs over those $r$-tuples of vectors whose $\R$-span has dimension $l$. Since the set $\{\rk \bm m=l\}$ is \sldz-invariant for each $l$, we can pass the sum over $l$ through the integral sign. To prove the Theorem, it suffices to show that corresponding terms in the expression above and in \eqref{eq:moments} match for each $l$. Now observe that $$\{\rk \bm m=l\}=\bigcup_{\pi\in\Gr (l,r)(\Q)} \{\mbox{$r$-tuples of vectors from $(\pi\cap \Z^r)^d$ with rank $l$}\}.$$ In fact, the sets whose union we are taking constitute an \sldz-invariant partition. Thus we need to parametrize linearly independent vectors in $(\pi\cap \Z^r)^d$ for each $\pi.$ Let $B\colon \R^l\to\R^r$ be a linear map with image $\pi$ such that $B(\Z^l)=\pi\cap \Z^r.$ There can be many of these; any one will do. Using the standard basis, $B=(b_i^j)$ with $1\le i\le l$ and $1\le j\le r$, and we obtain $\bm m^j=\sum_i b_i^j\bm n^i$, where $\bm n^i\in\Z^{d}$ form a linearly independent set. Thus the integral becomes $$\frac1{\mu(G/\Gamma)}\int_{G/\Gamma}\sum_{\pi\in\Gr (l,r)(\Q)}\left[\sum_{\substack{\bm n^1,\dots,\bm n^l\in\Z^d\\\text{linearly indep.}}}F(\sns M \sum b_i^1\bm n^i,\dots,\sns M \sum b_i^r \bm n^i)\right]d\mu(\sns M).$$ The quantity in brackets is $\Gamma$-invariant, so the sum over $\pi$ can be interchanged with the integral. 

For each $\pi$ and $B$ we can now apply the Lemma. It gives $$\sum_{\pi\in\Gr (l,r)(\Q)}\int_{(\R^d)^r}F(\sum b_i^1\bm x^i,\dots, \sum b_i^r \bm x^i)d\bm x^1\dots d\bm x^l.$$ Since $B(\Z^l)=\pi\cap \Z^r,$ the Jacobian of $B$ is the covolume of $\pi$. The statement of the Theorem follows after a linear change of variables. 
 
\end{proof}


\begin{prop}
For any $\Xi$ and $\tau$ we have $$\P^{(d)}_{1,\sigma,\Xi,\tau/\Xi}\Longrightarrow \Pois\sigma$$ as $d\to\infty$. 
\end{prop}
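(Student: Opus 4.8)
The plan is to argue by the method of (factorial) moments. The Poisson law $\Pois\sigma$ is determined by its moments, its $r$-th factorial moment equals $\sigma^r$ (writing $X^{\underline r}=X(X-1)\cdots(X-r+1)$), and ordinary moments are recovered through the Stirling identity $X^r=\sum_{k=1}^r S(r,k)\,X^{\underline k}$ already used above. Hence it suffices to prove that for every fixed $r\ge1$ the $r$-th factorial moment of $\P^{(d)}_{1,\sigma,\Xi,\tau/\Xi}$ tends to $\sigma^r$ as $d\to\infty$. By the explicit form \eqref{eq:form}, this factorial moment equals $\int_{\hat L_V/\hat\Lambda_V}\sum^{*}\prod_{j=1}^r\mathbf 1_W(\sns M\bm m^j+\bm v)\,d\mu$, where $W=(0,1)^{d-1}\times(\tau,\tau+\sigma)\subset\R^d$ has $\vol W=\sigma$ and the starred sum runs over $r$-tuples of distinct nonzero $\bm m^1,\dots,\bm m^r\in\Z^d$.

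I would then evaluate this integral by the rank decomposition behind Theorem \ref{th:moments}, splitting the starred sum according to $l=\rk(\bm m^1,\dots,\bm m^r)$ and applying the affine analog of the Lemma to each stratum (affine because of the fiber shift $\bm v$). For the top stratum $l=r$ the tuples are linearly independent, and unfolding over the base $\sldr/\sldz$ together with the fiber $\Xi^d$ makes the points $\sns M\bm m^j+\bm v$ jointly equidistribute in $(\R^d)^r$, so this stratum contributes exactly $\int_{(\R^d)^r}\prod_j\mathbf 1_W(\bm x^j)\,d\bm x=(\vol W)^r=\sigma^r$, uniformly in $d$, $\Xi$, and $\tau$. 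Every lower stratum $l<r$ is, by the affine version of \eqref{eq:moments} and Remark \ref{rem:coordinateplanes}, indexed by proper rational planes $\pi$ not spanned by coordinate vectors and carries the weight $(\covol\pi_\Z)^{-d}$.

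The crux is to show that the lower strata die as $d\to\infty$. Each such $\pi$ has $\covol\pi_\Z\ge c>1$, so the weight $(\covol\pi_\Z)^{-d}\to0$ pointwise; the real work, and the main obstacle, is to sum these contributions over the infinitely many admissible planes and bound the total uniformly, playing the growing combinatorial number of planes (and the $d$-dependence of the residual lower-dimensional integrals $\int_{\pi'}\prod_j\mathbf 1_W$) against the exponential decay of the covolume weights. This is exactly the type of uniform estimate on the Rogers--Siegel mean value formula that drives the large-$d$ limits of Marklof \cite{marklof_2000}, and I expect it to be the technical heart of the proof.

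Finally I would dispose of the dependence on $\Xi$ and $\tau$, which is where the restriction to a single interval matters. With $n=1$ there is no second window with which to resonate, so no arithmetic obstruction can appear; the shift $\tau$ merely translates $W$ along its last coordinate and leaves $\vol W=\sigma$ unchanged. The fiber $\Xi^d$ is handled uniformly in both regimes: if $\Xi=\T$ (irrational $\xi$) the fiber is $\T^d$ and one uses the grid/affine-lattice form of the mean value formula directly, whereas if $\Xi$ is finite of order $q$ (rational $\xi$) averaging over the finite fiber $\Xi^d$ amounts to averaging the point count over the finitely many cosets of $\Z^d$ in $\tfrac1q\Z^d$, each of which again has mean $\sigma$. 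Assembling the surviving top stratum then gives $\E[X^{\underline r}]\to\sigma^r$ for every $r$, whence $\P^{(d)}_{1,\sigma,\Xi,\tau/\Xi}\Longrightarrow\Pois\sigma$.
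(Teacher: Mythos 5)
Your strategy---method of moments, the mean value formula of Theorem \ref{th:moments}, and stratification of the tuples $(\bm m^1,\dots,\bm m^r)$ by the rational plane they span---is the same as the paper's, up to the variant of using factorial moments (so that only the top stratum should survive) where the paper uses ordinary moments (so that the partition planes survive and yield $\sum_l S(k,l)\sigma^l$ directly). However, the step you defer as ``the technical heart'' is precisely the content of the paper's proof, and the mechanism you propose for it is not the right one. It is not true that a lower stratum dies simply because $\covol\pi_\Z\ge c>1$: the numerator $\int_{\pi'}\prod_j\mathbf 1_W$ also factors over the $d$ coordinates of $\R^d$ and scales like the $d$-th power of a cross-sectional volume, so it can exactly compensate the weight $(\covol\pi_\Z)^{-d}$. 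For instance the diagonal $\pi=\{(x,x)\}\subset\R^2$ has $\covol\pi_\Z=\sqrt2>1$, yet its term does not decay, since the length of $\pi\cap[0,1]^2$ is also $\sqrt2$. The correct quantity to control is the ratio $\int_\pi G_1\,d\bm x/\covol\pi_\Z$, where $G_1$ is the fiber-averaged indicator of the unit cube in $\R^r$ (see \eqref{eq:covol}), and the paper's key geometric observation is that this ratio is $\le 1$, with equality iff $\pi\cap[0,1]^r$ is a fundamental domain for $\pi_\Z$, which forces a $\{0,1\}$-basis of $\pi_\Z$ with pairwise disjoint supports, i.e.\ a partition plane. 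Those are exactly the planes your distinctness condition removes, so your factorial-moment route would indeed close---but only after this lemma, which your proposal does not supply.

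A smaller point: your worry about ``the growing combinatorial number of planes'' is misplaced. For a fixed moment order $r$ the planes range over $\Gr(r,l)(\Q)$ with $l\le r$, a countable set that does not depend on $d$; only the exponent $d$ in each weight grows. Once the ratio above is known to be $\le1$, each term is non-increasing in $d$, and dominated convergence against the (finite) sum at some fixed $d_0>r$ disposes of the summation. You should also make explicit, as the paper does, how the fiber average producing $G_t$ is modified in the rational case ($\xi=p/q$), where the integral over $y\in[0,1]$ becomes an average over $r/q$, $0\le r<q$; the crystal argument goes through because the resulting function is still an average of translated cubes.
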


\begin{proof}From \eqref{eq:form}, all we need to show is that moments of $f_{\tau,\sigma}$ are Poissonian for large $d$. First consider that case when $\xi \not\in\Q$.  Without loss of generality we set $\tau=0$. Then we need to find 
\beq\label{eq:poisson}\lim_{d\to\infty}\int_{\sns M\in G/\Gamma}\int_{\bm v} \left[\sum_{\bm m\in\Z^d\setminus\{0\}} (\chi_1\cdot\ldots\cdot\chi_1\cdot\chi_\sigma)(\sns M\bm m+\bm v)\right]^kd\mu(\sns M)d\bm v\eeq
for $k=0,1,\dots$ Taking the integral over $\bm v$ inside the sum, we clear the way for Theorem \ref{th:moments} applied to $$F(\bm x_1,\dots,\bm x_d)=G_1(\bm x_1)\dots G_1(\bm x_{d-1})G_\sigma(\bm x_d)$$ where $$G_t(\bm z)=\int_{y=0}^1\chi_t(z_1+y)\dots\chi_t(z_k+y)dy.$$ For any plane $\pi'$ as in the Theorem, we have that 
\beq\label{eq:covol}\int_{\pi'}F(x)dx/(\covol \pi)^d=\left(\frac{\int_\pi G_1(\bm x_1)d\bm x_1}{\covol \pi}\right)^d\cdot\frac{\int G_\sigma(\bm x_d)d\bm x_d}{\int G_1(\bm x_d)d\bm x_d}.\eeq
It is elementary to see that the quantity raised to the power $d$ is at most one: the numerator is the volume of $\pi$ ``lying'' inside the ``crystal'' shape, and $\covol\pi_\Z$ is the volume of a fundamental domain. To wit, consider first the case when $G_t(\bm z)$ is replaced by the indicator of $[0,1]^k$. Since vertices of any fundamental domain for $\pi_\Z$ have integer coordinates, it can completely cover the part of the plane inside the cube. Furthermore, the quantity in parentheses can equal one only when $\pi\cap [0,1]^k$ constitutes a fundamental domain for $\pi_\Z$. This means that there exists a \Z-basis $\{e_i\}_{1}^k$ for $\pi_\Z$ such that 
\begin{itemize}\item $e_i\in\{0,1\}^k$, $1\le i\le k$; \item $e_i+e_{i'}\in \{0,1\}^k$, $1\le i, i'\le k$. 
\end{itemize}
Hence two distinct $e_i$, $e_{i'}$ cannot take on the value 1 in the same coordinate. The same argument extends to other cubes of the form $[-s,1-s]^k$ for $s\in [0,1]$ and so, too, for the original $G_t(\bm z)$ as it is an average over cubes of this kind. 

The above argument shows that the limit as $d\to\infty$ exists for each moment and that rate of convergence is exponential. To understand this limit, we focus on the terms with $\int_\pi G(x_1)dx_1/\covol\pi=1$. Since in \eqref{eq:poisson} we omit the terms in which any of $m^l=0$, the only terms that survive after taking the limit are the ones with $\pi$ generated by $e_i$ for which $\sum_{i=1}^k e_i=(1,\dots,1)$ (no zero coordinates). For planes $\pi$ of  fixed dimension $l$ the number of possibilities is the number of partitions of a set of $k$ elements into $l$ non-empty subsets, which is exactly $S(k,l)$. Finally observing that the last factor in \eqref{eq:covol} is $\sigma^{\dim \pi}=\sigma^l$, we find that the $k$-th moment tends to $$\sum_{l=1}^k S(k,l)\sigma^l,$$ which is the corresponding moment of the Poisson distribution with parameter $\sigma.$

In the case when $\xi=p/q\in\Q$ we can modify the above proof. The integral over $v$ becomes a finite sum, and we let $G_t(z)=\frac1q\sum_{r=0}^{q-1}\chi_t(z_1+r/q)\dots\chi_t(z_d+r/q)$; the variable $\tau$ appears in an equation similar to \eqref{eq:covol} and doesn't enter the definition of $G_t(z)$. The statements from the continuous version are true for this function as well (since it is also an average over cubes), and the proof is complete.

\end{proof}

Generalizing this proposition we can obtain the statement of Theorem 2. 

\begin{proof}[Proof of Theorem 2]
What we need to show is that $$\int_{\sns M}\int_{V\in\Xi^d}
\sum_{\substack {\bm m^{1,1}, \dots, \bm m^{k^1,1}\\ \vdots \\ \bm m^{1,n},\dots,\bm m^{k^n,n}}\in\Z^d\setminus\{0\}} 
\prod_{j=1}^n\prod_{j'=1}^{k^j} \chi_1\cdot\ldots\cdot\chi_1\cdot\chi_{(\tau^j,\sigma^j+\tau^j)}(\sns M\bm m^{j',j}+\bm v^{j})\,d\mu(\sns M)\,dV$$ has a limit as $d\to\infty$ for every choice of $k^1,\dots,k^j$. If $\bm Y^{(d)}_{n, \bm\sigma,\Xi,\tau/\Xi}=(Y^1,\dots,Y^n)$ is distributed according to $\P^{(d)}_{n,\bm\sigma,\Xi,\bm\tau/\Xi}$, then this expression is nothing more than the moment of order $(k^1,\dots, k^j)$. 

Now we make a simplifying observation: we can assume that $k^j=1$ for all $j$ without loss of generality since taking all possible $n$ and $\Xi$ and computing $\E\prod Y^j$ produces all the moments $\E\prod(Y^j)^{k^j}$. That is, duplicating the random variable $Y^j$ $k^j$ times allows us to assume that $k^j$ is 1. So we need to analyze 
$$\int_{\sns M}\int_{V\in\Xi^d}
\sum_{\bm m^j\in\Z^d\setminus\{0\}} 
\prod_{j=1}^n \chi_1\cdot\ldots\cdot\chi_1\cdot\chi_{(\tau^j,\sigma^j+\tau^j)}(\sns M\bm m^{j}+\bm v^{j})\,d\mu(\sns M)\,dV,$$ which by Theorem \ref{th:moments} is 
\beq\mathop{\sum^\prime\nolimits}_{\pi\in\Gr(r,l)(\Q)}\int_{V\in \Xi^d}\int_{\pi'}\label{eq:generalmoment}\frac{dx}{(\covol\pi_\Z)^d}\prod_{j=1}^{n}\chi_1\dots\chi_{(\tau^j,\tau^j+\sigma^j)}(x^{j}+\bm v^j)dV.\eeq 
Since $\bm m^j$ are non-zero, we exclude the ``coordinate planes'' as in Remark \ref{rem:coordinateplanes}; this is denoted by the prime in the formula above. 

We need to account for planes $\pi\in\Gr(r,l)(\Q)$ that will contribute in the limit $d\to\infty.$ By the argument from the previous proposition we have that \beq\label{eq:cube}\int_{\pi'}\prod_{j=1}^{n}\chi_1\dots\chi_{(\tau^j,\tau^j+\sigma^j)}(x^{j}+\bm v^j)dx\le (\covol\pi_\Z)^d.\eeq Since $\Xi^d$ is normalized to have measure 1, it suffices to study the integrand for fixed $V\in\Xi^d$. If we can find $V$ and $\pi$ for which strict inequality is true in \eqref{eq:cube}, then by continuity we have strict inequality for the integral over $V\in\Xi^d$ and thus conlude that $\pi$ doesn't contribute in the limit. We will do this for $V=0$ first. A plane that will contribute in the limit $d\to\infty$ must satisfy the property that $\pi\cap[0,1]^r$ is a fundamental domain for $\pi\cap\Z^r$ as in the previous proposition. For each of these planes we can try to find another $V$ that gives strict inequality in \eqref{eq:cube}. If $V\in\pi$, we are translating the cube along the plane and thus getting the same cross-sectional area. So suppose $V\in\Xi\setminus\pi$; this corresponds to cutting the cube with a plane parallel to $\pi$. It is easy to see that for such planes the section will always have smaller area than the one for $V\in\pi$. Thus it must be the case that $\Xi\subset\pi$. 

To summarize, a plane $\pi$ contributes to the limit only if $\pi\cap [0,1]^r$ is a fundamental domain for $\pi\cap\Z^r$ and $\Xi\subset\pi$. This means that $\P^{(d)}_{n,\bm\sigma,\Xi,\bm\tau/\Xi}$ has a limit as $d\to\infty$ because all moments exist. If we write $\bm\xi=(\xi^1,\dots,\xi^1,\xi^2,\dots,\xi^2,\dots,\xi^{n'},\dots,\xi^{n'})$ reordering as necessary, then $\pi$ must a product of admissible planes for $(\xi^1,\dots,\xi^1)$, $(\xi^2,\dots,\xi^2)$, \dots, $(\xi^{n'},\dots,\xi^{n'})$. Hence the moment will split as the product of  moments over distinct $\xi^j$. Using this observation and  the previous proposition we see that in the case of distinct $\xi^j$ in Theorem 2 the limiting distribution is the product of independent Poisson distributions. If $\xi^j=\xi^{j'}$ but $(\tau^j,\tau^j+\sigma^j)\cap(\tau^{j'},\tau^{j'}+\sigma^{j'})=\varnothing$, then such $\xi^j$ and $\xi^{j'}$ behave as if they were unequal since the factor $$\prod \chi_{(\tau^j,\tau^j+\sigma^j)}(x^j+\bm v^j)$$ from \eqref{eq:cube} vanishes. It is evident that if $(\tau^j,\tau^j+\sigma^j)\cap(\tau^{j'},\tau^{j'}+\sigma^{j'})\ne\varnothing$ for some $j,j'$, then the limiting distribution cannot be a product of independent distributions. This concludes the proof of Theorem 2. 
\end{proof}

\bibliographystyle{plain}

\bibliography{bibliography}

\end{document}